\documentclass[11pt,reqno]{amsart}
%%%%%%%%%%%%%%%%%%%%%%%%%%%%
% Packages we like to use. %
%%%%%%%%%%%%%%%%%%%%%%%%%%%%
\usepackage{amsmath}
\usepackage{amssymb}
\usepackage{amsthm}
\usepackage{mathrsfs}
\usepackage{enumerate}
\usepackage[english]{babel}
\usepackage{hyperref}
\usepackage[all,cmtip]{xy}
\entrymodifiers={+!!<0pt,\fontdimen22\textfont2>}

%%Environment%%%%%%%%%%%%%%%%%%%%%%%%%%%%%%%%%%%%%%
\newtheorem{thm}{Theorem}[section]
\newtheorem{lem}[thm]{Lemma}
\newtheorem*{thm-i}{Theorem}
\newtheorem{cor}[thm]{Corollary}

\theoremstyle{definition}

\newtheorem{rem}[thm]{Remark}
\newtheorem{defn}[thm]{Definition}

\newtheorem{ex}[thm]{Example}

\newtheorem*{acknowledgments*}{Acknowledgments}

\numberwithin{equation}{section}

\theoremstyle{remark}

\usepackage[a4paper,text={15.5cm,26cm},centering]{geometry}
\setlength{\parskip}{1.2ex}
\clubpenalty = 10000
\widowpenalty = 10000

%Symbols%%%%%%%%%%%%%%%%%%%%%%%%%%%%%%%%%%%

%angle bracket

%coloneqq
\mathchardef\ordinarycolon\mathcode`\: 
\def\vcentcolon{\mathrel{\mathop\ordinarycolon}} 
\providecommand*\coloneqq{\mathrel{\vcentcolon\mkern-1.2mu}=}

%numbers
 %integer
 %rational
 %real
\def\C{{\mathbb C}} %complex
\def\N{{\mathbb N}}

%tensor
\DeclareMathOperator\x{\otimes}

%Tor
\DeclareMathOperator\Tor{Tor}
%cohomologies

\def\cast{$C^{*}$}

\def\id{{\mathrm{id}}}

%Kunneth

%\def\N{\mathbb N}
%\def\Nmax{\N_{\max}}
%\def\Nmin{\N_{\min}}

%categories

\begin{document}

\title{$K$-continuity is equivalent to $K$-exactness}
\author{Otgonbayar Uuye}
\date{\today}
\address{
School of Mathematics\\ 
Cardiff University\\
Senghennydd Road\\
Cardiff, Wales, UK.\\
CF24 4AG}
\email{UuyeO@cardiff.ac.uk}
\keywords{Operator algebras, $K$-theory, $K$-exactness, $K$-continuity}
\subjclass[2010]{Primary (46L80); Secondary (46L06)}

\begin{abstract} It is well known that the functor of taking the minimal tensor product with a fixed \cast-algebra preserves inductive limits if and only if it preserves extensions. In other words, tensor continuity is equivalent to tensor exactness. We consider a $K$-theoretic analogue of this result  and show that $K$-continuity is equivalent to $K$-exactness, using a result of M.~D{\u{a}}d{\u{a}}rlat.
\end{abstract}

\maketitle

\section{Introduction}

We denote the {\em spatial} or {\em minimal} tensor product of \cast-algebras by the symbol $\otimes$ (cf.\ \cite[Section IV.4]{MR1873025}, \cite[Section 3.3]{MR2391387}).

Let $A$ be a \cast-algebra. We say that $A$ is {\em $\otimes$-exact} if for every extension (i.e.\ short exact sequence)
	\begin{equation*}
	\xymatrix@1{0 \ar[r] & I \ar[r]  & D \ar[r] & B \ar[r] & 0}
	\end{equation*} 
of \cast-algebras, the natural sequence 
	\begin{equation*}
	\xymatrix@1{0 \ar[r] & A \otimes I \ar[r]  & A \otimes D \ar[r] & A \otimes B \ar[r] & 0}
	\end{equation*} 
is exact. Let $M_{n}$ denote the \cast-algebra of $n \times n$ complex matrices. Letting 
	\begin{align*}
	\prod_{n = 0}^{\infty} M_{n} &\coloneqq \left\{(a_{n})_{n = 0}^{\infty} \,\Big\vert\, a_{n} \in M_{n} \text{ for all $n$ and }  \sup_{n} ||a_{n}|| < \infty \right\}\quad\text{and}\\
	\bigoplus_{n = 0}^{\infty} M_{n} &\coloneqq \left\{(a_{n})_{n = 0}^{\infty} \,\Big\vert\,  a_{n} \in M_{n} \text{ for all $n$ and }  \lim_{n \to \infty} ||a_{n}|| = 0 \right\},	
	\end{align*}
we get an extension
	\begin{equation*}
	\xymatrix{
	0 \ar[r] & \bigoplus_{n=0}^{\infty}M_{n} \ar[r]  & \prod_{n=0}^{\infty} M_{n} \ar[r] &  \prod_{n=0}^{\infty} M_{n}\big/\bigoplus_{n=0}^{\infty} M_{n} \ar[r] & 0}.
	\end{equation*}
E.~Kirchberg proved the following fundamental result about $\otimes$-exactness. See \cite{MR2391387} for more details.
\begin{thm-i}[{E.~Kirchberg \cite{MR715549,MR1322641}}] Let $A$ be a \cast-algebra. The following statements are equivalent.
\begin{enumerate}[(i)]
\item\label{it t-exact} The algebra $A$ is $\otimes$-exact.
\item\label{it m-exact} The sequence 
	\begin{equation*}
	\xymatrix@C=12pt{
	0 \ar[r] & A \otimes \bigoplus_{n=0}^{\infty}M_{n} \ar[r]  & A \otimes \prod_{n=0}^{\infty} M_{n} \ar[r] & A \otimes \left(\prod_{n=0}^{\infty} M_{n}\big/\bigoplus_{n=0}^{\infty} M_{n}\right) \ar[r] & 0
	}
	\end{equation*}
is exact.
\item\label{it nuc emb} The algebra $A$ is nuclearly embeddable in the sense of \cite{MR1026768}.
\qed
\end{enumerate}
\end{thm-i}
We remark that the implication (\ref{it nuc emb}) $\Rightarrow$ (\ref{it t-exact}) was proved by S.~Wassermann in \cite{MR1058315}.
%\begin{proof} The implication
%\end{proof}

We say that $A$ is {\em $\otimes$-continuous} if for every inductive sequence  
	\begin{equation*}
	\xymatrix@1{B_{0} \ar[r] & B_{1} \ar[r] & B_{2} \ar[r] & \dots}
	\end{equation*}
of \cast-algebras, the natural (surjective) map 
	\begin{equation*}
	\xymatrix@1{\varinjlim (A \otimes B_{n}) \ar[r] & A \otimes \varinjlim B_{n}}
	\end{equation*} 
is an isomorphism, where $\varinjlim$ denotes the inductive limit functor. 

The following result is well-known and follows from the equivalence (\ref{it t-exact}) $\Leftrightarrow$ (\ref{it m-exact}) in the theorem above. N.~Ozawa attributes it to E.~Kirchberg. 
\begin{thm}\label{thm K}
A \cast-algebra is $\otimes$-exact if and only if it is $\otimes$-continuous.
\end{thm}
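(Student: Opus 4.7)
\emph{Plan.} The strategy is to prove each implication separately, reducing to the appropriate part of Kirchberg's theorem above.

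First assume $A$ is $\otimes$-exact. By the equivalence (\ref{it t-exact}) $\Leftrightarrow$ (\ref{it nuc emb}), $A$ embeds as a \cast-subalgebra of some nuclear \cast-algebra $N$. For such $N$, the minimal tensor product coincides with the maximal one, and $N \otimes_{\max} -$ preserves inductive limits by the universal property of $\otimes_{\max}$ (a compatible system of commuting pairs of $*$-homomorphisms out of $N$ and $B_{n}$ is the same data as a commuting pair out of $N$ and $\varinjlim B_{n}$). Thus $N \otimes \varinjlim B_{n} \cong \varinjlim(N \otimes B_{n})$. Combining this with the faithfulness of the minimal tensor product on subalgebras (so that $A \otimes - \hookrightarrow N \otimes -$ at each stage) and the preservation of injections by \cast-algebraic inductive limits, a diagram chase shows that the canonical surjection $\varinjlim(A \otimes B_{n}) \twoheadrightarrow A \otimes \varinjlim B_{n}$ is also injective, hence an isomorphism.

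Now assume $A$ is $\otimes$-continuous. By the equivalence (\ref{it m-exact}) $\Leftrightarrow$ (\ref{it t-exact}), it suffices to verify (\ref{it m-exact}). The first map is injective and the last surjective without any assumption, so the nontrivial content is the equality
\[
\ker\bigl(A \otimes {\textstyle\prod} M_{n} \to A \otimes Q\bigr) = A \otimes {\textstyle\bigoplus} M_{n}, \qquad Q := {\textstyle\prod} M_{n}\big/{\textstyle\bigoplus} M_{n}.
\]
Set $F_{N} := \bigoplus_{n \leq N} M_{n}$, so $\bigoplus M_{n} = \varinjlim F_{N}$, and by continuity $A \otimes \bigoplus M_{n} = \varinjlim(A \otimes F_{N})$. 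The unit $e_{N}$ of $F_{N}$ is a central projection in $\prod M_{n}$; the finite splitting $\prod M_{n} = F_{N} \oplus P_{N}$ with $P_{N} := \prod_{n > N} M_{n}$ induces $A \otimes \prod M_{n} = A \otimes F_{N} \oplus A \otimes P_{N}$, and for $x$ in the kernel I decompose $x = (1 \otimes e_{N}) x + y_{N}$ with $(1 \otimes e_{N}) x \in A \otimes F_{N} \subseteq A \otimes \bigoplus M_{n}$ and $y_{N} \in A \otimes P_{N}$.

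The main obstacle is to show $\|y_{N}\| \to 0$, which would place $x$ in the closed ideal $A \otimes \bigoplus M_{n}$. This bound does not follow from the formal decompositions alone and is the single step where $\otimes$-continuity must enter substantively, via a compatible identification of $A \otimes Q$ with a suitable limit built from the $A \otimes P_{N}$ (or from the self-similar tail quotients $A \otimes P_{N}/(A \otimes \bigoplus_{n > N} M_{n}) \cong A \otimes Q$), so that the vanishing of $x$ in $A \otimes Q$ actually forces $\|y_{N}\| \to 0$ rather than being a tautology. Once this is established, Kirchberg's implication (\ref{it m-exact}) $\Rightarrow$ (\ref{it t-exact}) completes the argument.
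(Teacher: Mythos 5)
Your forward implication is correct but takes a genuinely different, and much heavier, route than the paper: you invoke Kirchberg's deep equivalence (\ref{it t-exact}) $\Leftrightarrow$ (\ref{it nuc emb}), the commutation of $\otimes_{\max}$ with inductive limits, and then restrict along $A \hookrightarrow N$ using injectivity of the minimal tensor product and the fact that inductive limits of \cast-algebras preserve injections. That chain of reductions does work. The paper instead argues directly from the definition of $\otimes$-exactness: given $B_{0} \to B_{1} \to \cdots$, it forms the split extensions $0 \to \bigoplus_{k=0}^{n-1}B_{k} \to \bigoplus_{k=0}^{n}B_{k} \to B_{n} \to 0$ with injective connecting maps on the first two terms, tensors with $A$, passes to limits, and reads off the isomorphism $\varinjlim(A \otimes B_{n}) \cong A \otimes \varinjlim B_{n}$ from the five lemma (Lemma~\ref{lem main}(\ref{it lim diag tensor})). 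What the paper's version buys is self-containedness (no appeal to nuclear embeddability) and, more importantly, an argument that transports verbatim to $K$-theory in the proof of Theorem~\ref{thm main}($\Rightarrow$); your version does not obviously $K$-theorize.

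The converse direction has a genuine gap, which you flag yourself: you never prove $\|y_{N}\| \to 0$, i.e.\ that $\ker\bigl(A \otimes \prod M_{n} \to A \otimes Q\bigr) \subseteq A \otimes \bigoplus M_{n}$, and this containment is the entire content of condition (\ref{it m-exact}). The element-by-element decomposition $x = (1 \otimes e_{N})x + y_{N}$ by itself proves nothing, since a priori the decreasing sequence $\|y_{N}\|$ computes the distance from $x$ to $A \otimes \bigoplus M_{n}$, which is exactly the quantity in question --- as you note, any direct attempt to relate it to the image of $x$ in $A \otimes Q$ is circular. The missing idea is the one you gesture at but do not execute: realize $Q$ as $\varinjlim_{N} P_{N}$ along the (surjective, non-injective) coordinate projections $P_{N} \to P_{N+1}$, so that $0 \to \bigoplus M_{n} \to \prod M_{n} \to Q \to 0$ is the inductive limit of the split extensions $0 \to F_{N} \to \prod M_{n} \to P_{N} \to 0$. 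Each of these stays exact after tensoring with $A$ because it is split; inductive limits of \cast-algebras preserve exactness; and $\varinjlim(A \otimes F_{N}) = A \otimes \bigoplus M_{n}$ and $\varinjlim\bigl(A \otimes \prod M_{n}\bigr) = A \otimes \prod M_{n}$ because those connecting maps are injective (Lemma~\ref{lem limit inj}). The hypothesis of $\otimes$-continuity then enters exactly once, to identify $\varinjlim(A \otimes P_{N})$ with $A \otimes Q$, and the five lemma delivers the exactness of $0 \to A \otimes \bigoplus M_{n} \to A \otimes \prod M_{n} \to A \otimes Q \to 0$; this is precisely Lemma~\ref{lem main}(\ref{it lim diag tensor}). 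Without this step the converse, as written, is not a proof.
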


In this paper, we consider a $K$-theoretic analogue of this result. See \cite{MR1222415, MR1656031,MR1783408} for details about topological $K$-theory for \cast-algebras. We say that a \cast-algebra $A$ is {\em $K$-exact} for every extension
	\begin{equation*}
%	\xymatrix@1{I \ar@{>->}[r]  & D \ar@{->>}[r] & B},
	\xymatrix@1{0 \ar[r] & I \ar[r]  & D \ar[r] & B \ar[r] & 0}
	\end{equation*} 
%	$%\begin{equation}
%	\xymatrix@1{I \ar@{>->}[r]  & D \ar@{->>}[r] & B}
%	$, %\end{equation}
 of \cast-algebras, the sequence
	\begin{equation*}
	\xymatrix@1{K_{0}(A \otimes I) \ar[r]  & K_{0}(A \otimes D) \ar[r] & K_{0}(A \otimes B)}
	\end{equation*}
is exact in the middle. We say that a \cast-algebra $A$ is {\em $K$-continuous} if for every inductive sequence 
	\begin{equation*}
	\xymatrix@1{B_{0} \ar[r] & B_{1} \ar[r] & B_{2} \ar[r] & \dots}
	\end{equation*}
of \cast-algebras, the natural map 
	\begin{equation*}
	\varinjlim K_{0}(A \otimes B_{i}) \longrightarrow K_{0}(A \otimes \varinjlim B_{i})
	\end{equation*} 
is an isomorphism.

The following is our main result.
\begin{thm}\label{thm main} A \cast-algebra is $K$-exact  if and only if it is $K$-continuous.
\end{thm}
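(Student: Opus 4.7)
The plan is to use the Kirchberg Theorem stated above as a black box---taking as given both the equivalence of $\otimes$-exactness and $\otimes$-continuity, and the reduction of either to the single distinguished extension in item~(\ref{it m-exact})---and to bridge $K$-theoretic data and $C^{*}$-algebraic exactness via the cited result of Dădărlat. The overall idea is to route the equivalence of $K$-exactness and $K$-continuity through $\otimes$-exactness as an intermediate condition. Two implications are already formal: if $A$ is $\otimes$-exact, then tensoring any extension $0 \to I \to D \to B \to 0$ with $A$ is again exact, and the six-term exact sequence in $K$-theory shows $A$ is $K$-exact; if $A$ is $\otimes$-continuous, then $A \otimes \varinjlim B_{n} \cong \varinjlim (A \otimes B_{n})$ and continuity of $K_{0}$ along inductive limits of \cast-algebras yields
    \begin{equation*}
    K_{0}(A \otimes \varinjlim B_{n}) \;\cong\; K_{0}(\varinjlim (A \otimes B_{n})) \;\cong\; \varinjlim K_{0}(A \otimes B_{n}),
    \end{equation*}
so $A$ is $K$-continuous. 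Kirchberg's Theorem makes these two hypotheses on $A$ coincide, so either already forces both $K$-exactness and $K$-continuity.

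For the converses, the plan is to argue that either $K$-hypothesis on $A$ suffices to force exactness of the single tensored sequence
    \begin{equation*}
    0 \to A \otimes \bigoplus_{n} M_{n} \to A \otimes \prod_{n} M_{n} \to A \otimes \bigl(\prod_{n} M_{n}\big/\bigoplus_{n} M_{n}\bigr) \to 0
    \end{equation*}
appearing in item~(\ref{it m-exact}) of Kirchberg's Theorem; once this is in hand, Kirchberg concludes $\otimes$-exactness of $A$ and the chain closes. Here the ideal $\bigoplus_{n} M_{n}$ is naturally the inductive limit of its partial sums $\bigoplus_{n \le N} M_{n}$, so $K$-continuity of $A$ directly computes $K_{0}(A \otimes \bigoplus_{n} M_{n}) \cong \bigoplus_{n} K_{0}(A)$, while $K$-exactness of $A$ controls the image of $K_{0}(A \otimes \prod_{n} M_{n})$ inside $K_{0}(A \otimes (\prod_{n} M_{n}/\bigoplus_{n} M_{n}))$.

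The main obstacle is the quotient $\prod_{n} M_{n}/\bigoplus_{n} M_{n}$: it is not an inductive limit of simple pieces, so its $K$-theory is not accessible by bare continuity, and a priori the canonical map $(A \otimes \prod_{n} M_{n})/(A \otimes \bigoplus_{n} M_{n}) \to A \otimes (\prod_{n} M_{n}/\bigoplus_{n} M_{n})$ need not be an isomorphism. It is exactly at this step that Dădărlat's result is indispensable: I expect to invoke it to upgrade the $K$-theoretic hypothesis on $A$ to full $C^{*}$-algebraic exactness of the distinguished sequence, after which Kirchberg's Theorem closes the equivalence in both directions.
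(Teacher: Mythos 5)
Your proposal has a fatal structural flaw: the converses you plan to prove are false. You propose to show that either $K$-exactness or $K$-continuity of $A$ forces exactness of the distinguished tensored sequence in item~(\ref{it m-exact}) of Kirchberg's theorem, and hence $\otimes$-exactness of $A$. But $K$-exactness is strictly weaker than $\otimes$-exactness: the paper itself points out that any \cast-algebra satisfying the K\"unneth formula is both $K$-exact and $K$-continuous, and in particular the full group \cast-algebra $C^{*}(F_{2})$ is $K$-exact and $K$-continuous but \emph{not} $\otimes$-exact. For such an $A$ the sequence $0 \to A \otimes \bigoplus_{n} M_{n} \to A \otimes \prod_{n} M_{n} \to A \otimes \bigl(\prod_{n} M_{n}\big/\bigoplus_{n} M_{n}\bigr) \to 0$ fails to be exact, so no amount of $K$-theoretic input can produce it. The heuristic failure is that $K_{0}$ only sees exactness of the tensored sequence up to the (typically enormous) kernel and cokernel of the induced maps on $K$-theory; there is no mechanism, in D\u{a}d\u{a}rlat's theorem or elsewhere, for ``upgrading'' exactness at the level of $K_{0}$ to exactness at the level of \cast-algebras. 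The two easy implications you do establish ($\otimes$-exact $\Rightarrow$ $K$-exact and $\otimes$-continuous $\Rightarrow$ $K$-continuous) are correct but do not connect $K$-exactness with $K$-continuity.

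The paper's actual argument stays entirely at the level of $K$-theory. For $K$-exact $\Rightarrow$ $K$-continuous, one realizes an inductive sequence $B_{0} \to B_{1} \to \dots$ as the quotients of an inductive system of \emph{split} extensions with $I_{n} = \bigoplus_{k<n} B_{k}$ and $D_{n} = \bigoplus_{k\le n} B_{k}$ and injective connecting maps; since $\varinjlim(A\otimes I_{n}) \cong A\otimes\varinjlim I_{n}$ and $\varinjlim(A\otimes D_{n}) \cong A\otimes\varinjlim D_{n}$ automatically (injective connecting maps), the six-term sequences coming from $K$-exactness plus the five-lemma show $\varinjlim K_{0}(A\otimes B_{n}) \cong K_{0}(A\otimes\varinjlim B_{n})$. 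For the converse, $K$-continuity of $A$ makes the functor $B \mapsto K_{0}(A\otimes B)$ on separable \cast-algebras continuous and homotopy invariant, so by D\u{a}d\u{a}rlat's theorem it factors through the Connes--Higson asymptotic homotopy category, hence (being stable and Bott periodic) through Higson's category $E$; half-exactness of any such functor, via the mapping-cone characterization of $K$-exactness (Lemma~\ref{lem k-exact cone} and Corollary~\ref{cor factor}), then gives $K$-exactness. That is the correct role of D\u{a}d\u{a}rlat's result, and it is quite different from the one you assign to it. You would need to discard the reduction to $\otimes$-exactness entirely and work along these lines.
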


In section~\ref{sec K}, we give a proof of Theorem~\ref{thm K} as we couldn't find a direct reference and the proof of the implication Theorem~\ref{thm K}($\Rightarrow)$ is used in proof of Theorem~\ref{thm main}($\Rightarrow$). In section~\ref{sec main}, we study the notions of $K$-exactness and $K$-continuity and prove Theorem~\ref{thm main}. We note that our proof of the implication Theorem~\ref{thm main}($\Leftarrow$) uses \cite[Theorem 3.11]{MR1262931} in a crucial way.
\begin{acknowledgments*} We thank Takeshi Katsura and Kang Li for interesting conversations on the topic of the paper. The author is an EPSRC postdoctoral fellow.
\end{acknowledgments*}

\section{Proof of Theorem~\ref{thm K}}\label{sec K}
Let $\N \coloneqq \{0, 1, \dots\}$ denote the set of positive integers. The following is obvious.
\begin{lem}\label{lem limit inj}
Let $A$ be a \cast-algebra and let 
	\begin{equation*}
	\xymatrix@1{B_{0} \ar[r] & B_{1} \ar[r] & B_{2} \ar[r] & \dots}
	\end{equation*}
be an inductive sequence of \cast-algebras. If the connecting maps are all injective, then the map  
	\begin{equation*}
	\xymatrix@1{\varinjlim (A \otimes B_{n}) \ar[r] & A \otimes \varinjlim B_{n}}
	\end{equation*}
is an isomorphism. \qed	 
\end{lem}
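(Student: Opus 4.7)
The plan is to exploit the fundamental property that the minimal tensor product is functorial with respect to injective $*$-homomorphisms: if $\phi\colon B \hookrightarrow C$ is injective, then so is $\id_A \otimes \phi\colon A \otimes B \to A \otimes C$ (cf.\ \cite[Section IV.4]{MR1873025}). Write $B_{\infty} \coloneqq \varinjlim B_{n}$. The hypothesis that each $B_{n} \to B_{n+1}$ is injective forces each canonical map $B_{n} \to B_{\infty}$ to be injective as well, and tensoring with $A$ preserves all of these injections. Consequently every structure map of the inductive system $(A \otimes B_{n})$ is injective, and each $A \otimes B_{n}$ embeds canonically into $A \otimes B_{\infty}$. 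This lets me identify $\varinjlim(A \otimes B_{n})$ with the closed $*$-subalgebra $\overline{\bigcup_{n}(A \otimes B_{n})} \subseteq A \otimes B_{\infty}$; under this identification the natural map of the lemma is simply inclusion, so injectivity is immediate.

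For surjectivity I would run a standard density argument. By definition of the minimal norm, the algebraic tensor product $A \odot B_{\infty}$ is dense in $A \otimes B_{\infty}$. Since $\bigcup_{n} B_{n}$ is norm-dense in $B_{\infty}$, every elementary tensor $a \otimes b$ with $b \in B_{\infty}$ is a norm limit of elementary tensors $a \otimes b_{k}$ with $b_{k} \in B_{n_{k}}$, and hence lies in the closure of $\bigcup_{n}(A \otimes B_{n})$. Taking finite linear combinations, all of $A \odot B_{\infty}$ lies in this closure, and one further passage to the norm closure gives equality with $A \otimes B_{\infty}$.

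There is no serious obstacle here. The only substantive ingredient is the injectivity of the spatial tensor product; everything else is a formal consequence of the description of an inductive limit with injective connecting maps as the norm closure of the union inside the limit. This is presumably why the author labels the lemma as obvious.
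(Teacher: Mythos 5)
Your argument is correct and is exactly the standard one the paper has in mind when it labels the lemma as obvious (the paper supplies no proof): injectivity of $\id_A\otimes\phi$ for injective $\phi$ under the minimal tensor product, identification of the inductive limit with the closure of $\bigcup_n (A\otimes B_n)$ inside $A\otimes\varinjlim B_n$, and density of the algebraic tensor product for surjectivity. No gaps.
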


\begin{lem}\label{lem main} Consider an inductive sequence of extensions of \cast-algebras
	\begin{equation*}
	\xymatrix{
	0 \ar[r] & I_{n} \ar[r] \ar[d]^{\imath_{n}} & D_{n} \ar[r] \ar[d]^{\delta_{n}}& B_{n} \ar[r] \ar[d]^{\beta_{n}}& 0\phantom{.}\\
	0 \ar[r] & I_{n+1} \ar[r]  & D_{n+1} \ar[r] & B_{n+1} \ar[r] & 0.\\
	}
	\end{equation*}
\begin{enumerate}[(i)]
\item\label{it lim diag} The limit sequence
	\begin{equation*}
	\xymatrix{
	0 \ar[r] & \varinjlim I_{n} \ar[r]  & \varinjlim D_{n} \ar[r] & \varinjlim B_{n} \ar[r] & 0
	}
	\end{equation*}
is  exact.
\item\label{it lim diag tensor} Suppose that for every $n \in \N$, the extension
	\begin{equation}\label{eq ext_n}
	\xymatrix{
	0 \ar[r] & I_{n} \ar[r]  & D_{n} \ar[r] & B_{n} \ar[r] & 0
	}
	\end{equation}
is split\footnote{Locally split is enough for our purposes \cite[Proposition 3.7.6]{MR2391387}. See \cite{MR791294}.} (i.e.\ the quotient map admits a $\ast$-homomorphic section) and the connecting maps $\imath_{n}$ and $\delta_{n}$ are injective. Then for any \cast-algebra $A$, the map
	\begin{equation*}
	\xymatrix@1{\varinjlim (A \otimes B_{n}) \ar[r] & A \otimes \varinjlim B_{n}}
	\end{equation*}
is an isomorphism if and only if the 	sequence
	\begin{equation*}
	\xymatrix{
	0 \ar[r] & A \otimes \varinjlim I_{n} \ar[r]  & A \otimes \varinjlim D_{n} \ar[r] & A \otimes \varinjlim B_{n} \ar[r] & 0
	}
	\end{equation*}
is exact.	
\end{enumerate}
\end{lem}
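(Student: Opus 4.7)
The plan for part (\ref{it lim diag}) is to appeal to the standard fact that sequential inductive limits of $C^*$-algebras preserve short exact sequences, verifying the three ingredients directly. The composition $\varinjlim I_n \to \varinjlim D_n \to \varinjlim B_n$ vanishes stage-by-stage; injectivity of $\varinjlim I_n \to \varinjlim D_n$ follows from the fact that each inclusion $I_n \hookrightarrow D_n$ is isometric, a property that survives the completion of the algebraic inductive limit; surjectivity of $\varinjlim D_n \to \varinjlim B_n$ together with exactness in the middle is a routine approximation argument, using that representatives from a finite stage can be lifted with controlled norms.

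For part (\ref{it lim diag tensor}), the plan proceeds in three steps. First, since each extension (\ref{eq ext_n}) splits, a $\ast$-homomorphic section persists after tensoring with $A$, so the sequence $0 \to A \otimes I_n \to A \otimes D_n \to A \otimes B_n \to 0$ is exact for every $n$. Applying part (\ref{it lim diag}) to the resulting inductive system of extensions yields exactness of
\[
	0 \to \varinjlim (A \otimes I_n) \to \varinjlim (A \otimes D_n) \to \varinjlim (A \otimes B_n) \to 0.
\]
Second, since $\imath_n$ and $\delta_n$ are injective, Lemma~\ref{lem limit inj} canonically identifies the first two terms with $A \otimes \varinjlim I_n$ and $A \otimes \varinjlim D_n$, producing the exact sequence
\[
	0 \to A \otimes \varinjlim I_n \to A \otimes \varinjlim D_n \to \varinjlim (A \otimes B_n) \to 0.
\]
Third, I compare this with the candidate sequence
\[
	0 \to A \otimes \varinjlim I_n \to A \otimes \varinjlim D_n \to A \otimes \varinjlim B_n \to 0,
\]
the two being linked at the last nontrivial term by the canonical surjection $\varphi \colon \varinjlim (A \otimes B_n) \to A \otimes \varinjlim B_n$.

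The equivalence is then a short diagram chase. If $\varphi$ is an isomorphism, the candidate sequence coincides with the exact one obtained in the second step and is therefore exact. Conversely, assuming the candidate is exact, I show $\varphi$ is injective: given $x \in \ker\varphi$, I lift $x$ through the surjection $A \otimes \varinjlim D_n \to \varinjlim (A \otimes B_n)$ to some $\tilde x$; commutativity places $\tilde x$ in the kernel of $A \otimes \varinjlim D_n \to A \otimes \varinjlim B_n$, so exactness of the candidate writes $\tilde x$ as the image of some $y \in A \otimes \varinjlim I_n$; but the image of $y$ in $\varinjlim (A \otimes B_n)$ already vanishes by the exactness established in step two, forcing $x = 0$. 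I expect the main subtlety to lie in this last chase, specifically in keeping the two quotients $\varinjlim (A \otimes B_n)$ and $A \otimes \varinjlim B_n$ notationally distinct and in justifying injectivity of the leftmost map of the candidate sequence---which itself follows from injectivity of $\varinjlim I_n \to \varinjlim D_n$ from part (\ref{it lim diag}) together with the fact that the minimal tensor product preserves injections.
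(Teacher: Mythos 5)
Your proposal is correct and follows essentially the same route as the paper: split exactness persists under $\otimes$, part (\ref{it lim diag}) gives exactness of the limit of the tensored extensions, Lemma~\ref{lem limit inj} identifies the ideal and middle terms, and the two rows are compared at the quotient term. The paper simply invokes the five-lemma where you carry out the diagram chase by hand; the chase you describe is the relevant instance of that lemma and is valid.
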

\begin{proof} (\ref{it lim diag}) is clear. For (\ref{it lim diag tensor}), consider the diagram
	\begin{equation}\label{eq diag of ses}
	\xymatrix{
	0 \ar[r] & \varinjlim (A \otimes I_{n}) \ar[r] \ar[d]^{\imath} & \varinjlim (A \otimes D_{n}) \ar[r] \ar[d]^{\delta} & \varinjlim (A \otimes B_{n}) \ar[r] \ar[d]^{\beta} & 0\phantom{.}\\
	0 \ar[r] & A \otimes \varinjlim I_{n} \ar[r]  & A \otimes \varinjlim D_{n} \ar[r] & A \otimes \varinjlim B_{n} \ar[r] & 0.\\
	}
	\end{equation}	
Since the connecting maps $\imath_{n}$ and $\delta_{n}$ are injective, the maps $\imath$ and $\delta$ are isomorphisms by Lemma~\ref{lem limit inj}. For any $n \in \N$, since (\ref{eq ext_n}) is split exact, the sequence
	\begin{equation*}
	\xymatrix{
	0 \ar[r] & A \otimes I_{n} \ar[r]  & A \otimes D_{n} \ar[r] & A \otimes B_{n} \ar[r] & 0
	}
	\end{equation*}
is exact. Thus the top row is exact by (\ref{it lim diag}). It follows that $\beta$ is an isomorphism if and only if the bottom row is exact by five-lemma.
\end{proof}

\begin{proof}[Proof of Theorem~\ref{thm K}] 
($\Rightarrow$): Let $A$ be a $\otimes$-exact \cast-algebra and let $\xymatrix@1{B_{0} \ar[r]^{\beta_{0}} & B_{1} \ar[r]^{\beta_{1}}  & \dots}$ be an inductive sequence.% with limit $B_{\infty}$. %We wish to show that the natural map 

Let $n \in \N$ and let $I_{n} \coloneqq \bigoplus_{k =0}^{n-1} B_{k}$ and let $D_{n} \coloneqq \bigoplus_{k =0}^{n} B_{k}$. Then the obvious inclusion and projection maps give a split extension
	\begin{equation*}
	\xymatrix{
	0 \ar[r] & I_{n} \ar[r]  & D_{n} \ar[r] & B_{n} \ar[r] & 0.
	}
	\end{equation*}
Let $\imath_{n}\colon I_{n} \to I_{n+1}$ denote the natural inclusion and let $\delta_{n}\colon D_{n} \to D_{n+1}$ denote the injective map given by
	\begin{equation*}
	\xymatrix@R=0pt{
	B_{0} \ar[r]^{\id} & B_{0}\\
	\oplus & \oplus\\
	\vdots  \ar[r]^{\id}& \vdots \\
	\oplus & \oplus\\
	%F_{n-1} \ar[r]^{\id} & F_{n-1}\\
	%\oplus & \oplus\\
	B_{n} \ar[r]^{\id} \ar[rdd]_-{\beta_{n}}& B_{n}\\
	 & \oplus\\
	  & B_{n+1}.\\
	}
	\end{equation*}	
Then we get a map of extensions 
	\begin{equation*}
	\xymatrix{
	0 \ar[r] & I_{n} \ar[r] \ar[d]^{\imath_{n}} & D_{n} \ar[r] \ar[d]^{\delta_{n}}& B_{n} \ar[r] \ar[d]^{\beta_{n}}& 0\phantom{.}\\
	0 \ar[r] & I_{n+1} \ar[r]  & D_{n+1} \ar[r] & B_{n+1} \ar[r] & 0.\\
	}
	\end{equation*}
Since $A$ is $\otimes$-exact, the sequence
	\begin{equation*}
	\xymatrix{
	0 \ar[r] & A \otimes \varinjlim I_{n} \ar[r]  & A \otimes \varinjlim D_{n} \ar[r] & A \otimes \varinjlim B_{n} \ar[r] & 0
	}
	\end{equation*}
is exact, hence the map 
	\begin{equation*}
	\varinjlim (A \otimes B_{n}) \to A \otimes \varinjlim B_{n}
	\end{equation*} 
is an isomorphism by Lemma~\ref{lem main}(\ref{it lim diag tensor}).

($\Leftarrow$): Conversely, let $A$ be a $\otimes$-continuous \cast-algebra. Let $M_{n}$, $n \in \N$, denote the \cast-algebra of $n \times n$ complex matrices. Consider the following inductive system of split extensions
	\begin{equation*}
	\xymatrix{
	0 \ar[r] & \bigoplus_{k=0}^{n}M_{k} \ar[r] \ar[d]& \prod_{k=0}^{\infty}M_{k} \ar[r] \ar@{=}[d] & \prod_{k = n+1}^{\infty} M_{k}\ar[r] \ar[d] & 0 \\
	0 \ar[r] & \bigoplus_{k=0}^{n+1}M_{k} \ar[r] & \prod_{k=0}^{\infty}M_{k} \ar[r] & \prod_{k = n+2}^{\infty} M_{k}\ar[r] & 0 
	}
	\end{equation*}
given by the obvious injection and projection maps. Since $A$ is $\otimes$-continuous, the map
	\begin{equation*}
	\varinjlim \left(A \otimes \prod_{k=n+1}^{\infty} M_{k}\right) \to A \otimes \left(\varinjlim \prod_{k=n+1}^{\infty} M_{k}\right)
	\end{equation*}
is an isomorphism, hence the sequence
	\begin{equation*}
	\xymatrix{
	0 \ar[r] & A \otimes \bigoplus_{n=0}^{\infty}M_{n} \ar[r]  & A \otimes \prod_{n=0}^{\infty} M_{n} \ar[r] & A \otimes \left(\prod_{n=0}^{\infty} M_{n}\big/\bigoplus_{n=0}^{\infty} M_{n}\right) \ar[r] & 0
	}
	\end{equation*}
is exact by Lemma~\ref{lem main}(\ref{it lim diag tensor}). It follows that $A$ is $\otimes$-exact (cf.\ \cite{MR715549}).
\end{proof}

\section{$K$-exactness and $K$-continuity}\label{sec main}

\subsection{$K$-exactness}
\begin{defn} We say that a \cast-algebra $A$ is {\em $K$-exact} if for every extension 
	\begin{equation*}
	\xymatrix@1{0 \ar[r] & I \ar[r]  & D \ar[r] & B \ar[r] & 0}
	\end{equation*} 
of \cast-algebras, the sequence
	\begin{equation*}
	\xymatrix@1{K_{0}(A \otimes I) \ar[r]  & K_{0}(A \otimes D) \ar[r] & K_{0}(A \otimes B)}
	\end{equation*}
is exact in the middle.	
\end{defn}

\begin{rem}\label{rem six-term} A \cast-algebra $A$ is $K$-exact if and only if for every extension 
	\begin{equation*}
	\xymatrix@1{0 \ar[r] & I \ar[r]  & D \ar[r] & B \ar[r] & 0}
	\end{equation*} 
of \cast-algebras, the natural six-term sequence
	\begin{equation*}
	\xymatrix{K_{0}(A \otimes I) \ar[r]  & K_{0}(A \otimes D) \ar[r] & K_{0}(A \otimes B) \ar[d]\\
	K_{1}(A \otimes B) \ar[u] & \ar[l] K_{1}(A \otimes D) & \ar[l] K_{1}(A \otimes I)
	}
	\end{equation*}
is exact (cf.\ \cite[Theorem 21.4.4]{MR1656031}).	
\end{rem}

\begin{ex}
$\otimes$-exact \cast-algebras are $K$-exact, by the half-exactness of $K$-theory (cf.\ \cite[Theorem 6.3.2]{MR1222415}, \cite[Theorem 5.6.1]{MR1656031}). 
\end{ex}

\begin{defn}
Let $C_{0}[0, 1)$ denote the commutative \cast-algebra of continuous functions on the interval $[0, 1]$ vanishing at $1 \in [0, 1]$, %:
%	\begin{equation}
%	C_{0}[0, 1) \coloneqq \{f \colon [0, 1] \to \C \mid \text{$f$ is continuous and }f(1) = 0\}
%	\end{equation}
and let 
	\begin{equation*}
	\mathrm{ev}_{0}\colon \xymatrix@R=0pt{C_{0}[0, 1) \ar[r] & \C\\ f \ar@{|->}[r]& f(0)}
	\end{equation*}
denote the evaluation map at $0 \in [0, 1)$.
\end{defn}
  
\begin{defn} Let $\phi\colon D \to B$ be a $*$-homomorphism of \cast-algebras. The {\em mapping cone} $C_{\phi}$ of $\phi$ is given by the pullback
	\begin{equation*}
	\xymatrix{C_{\phi} \ar[r] \ar[d] & C_{0}[0, 1) \x B \ar[d]^{\mathrm{ev}_{0} \x \id_{B}}\\
	D \ar[r]^{\phi} & B}
	\end{equation*}
\end{defn}

%The following is well known and easy to prove.
\begin{rem}\label{rem mapping cone} Let $\phi\colon D \to B$ be a $\ast$-homomorphism of \cast-algebras. Then for any \cast-algebra $A$, there is a natural isomorphism $C_{\id_{A} \otimes \phi} \cong A \otimes C_{\phi}$. Indeed, since $\mathrm{ev}_{0}\colon C_{0}[0, 1) \to \C$ is admits a completely positive section, we have a map of extensions 
	\begin{equation*}
\xymatrix{
	0 \ar[r] & A \otimes C_{0}(0, 1) \otimes B \ar[r] \ar@{=}[d] & A \otimes C_{\phi} \ar[r] \ar[d] & A \otimes D \ar[r] \ar[d]^{\id_{A} \otimes \phi}& 0\phantom{.}\\
	0 \ar[r] & A \otimes C_{0}(0, 1) \otimes B  \ar[r]  & A \otimes C_{0}[0, 1) \otimes B \ar[r] & A \otimes  B \ar[r] & 0,\\
	}
	\end{equation*}
where $C_{0}(0, 1)$ denotes the kernel of $\mathrm{ev}_{0}$. Now it is easy to see that the square on the right is a pullback square.	
\end{rem}

%\begin{defn} The {\em double mapping cone} of an extension 
%	\begin{equation}
%	\xymatrix@1{0 \ar[r] & I \ar[r]  & D \ar[r]^{q} & B \ar[r] & 0}
%	\end{equation}
%of \cast-algebras, with quotient map $q$ is defined to be the mapping cone of the natural inclusion $\xymatrix@1{I \ar@{>->}[r] & C_{q}}$. 
%\end{defn}

\begin{lem}[{cf.\ \cite[p.\ 335-336]{MR1911663}.}]\label{lem k-exact cone} A \cast-algebra $A$ is $K$-exact if and only if for every extension 
	\begin{equation*}%\label{eq sep ext}
	\xymatrix@1{0 \ar[r] & I \ar[r]  & D \ar[r]^{q} & B \ar[r] & 0}
	\end{equation*}
of {\em separable} \cast-algebras, the natural inclusion map $\xymatrix@1{\iota\colon I \ar[r] & C_{q}}$ induces an isomorphism 
	\begin{equation*}%\label{eq imath}
	(\id_{A} \otimes \iota)_{*}\colon K_{0}(A \otimes I) \cong K_{0}(A \otimes C_{q}).
	\end{equation*}
\end{lem}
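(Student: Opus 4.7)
The plan is to attach two canonical short exact sequences to a given extension $0 \to I \to D \xrightarrow{q} B \to 0$ and play them against each other. The first is the \emph{mapping cone extension}
\begin{equation*}
0 \to C_{0}(0,1) \otimes B \to C_{q} \to D \to 0,
\end{equation*}
which is built into the pullback description of $C_{q}$ (the kernel of $C_{q} \to D$ is the kernel $C_{0}(0,1) \otimes B$ of $\mathrm{ev}_{0} \otimes \id_{B}$). The second, which I will call the \emph{vertical extension}, is
\begin{equation*}
0 \to I \xrightarrow{\iota} C_{q} \to C_{0}[0,1) \otimes B \to 0,
\end{equation*}
arising from the projection $C_{q} \to C_{0}[0,1) \otimes B$; exactness is a direct check using $I = \ker q$ and surjectivity of $q$.

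For $(\Rightarrow)$, I would apply the six-term exact sequence of Remark~\ref{rem six-term} to the vertical extension tensored with $A$. The right-hand term $A \otimes C_{0}[0,1) \otimes B \cong C_{0}[0,1) \otimes (A \otimes B)$ is a cone, hence has vanishing $K$-theory, and the long exact sequence forces $(\id_{A} \otimes \iota)_{*}$ to be an isomorphism on both $K_{0}$ and $K_{1}$.

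For $(\Leftarrow)$, I would first reduce $K$-exactness for an arbitrary extension to the separable case. Writing $D$ as the filtered colimit of its separable \cast-subalgebras $D_{\alpha}$, set $I_{\alpha} \coloneqq D_{\alpha} \cap I$ and $B_{\alpha} \coloneqq q(D_{\alpha})$; the original extension is then the colimit of the separable extensions $0 \to I_{\alpha} \to D_{\alpha} \to B_{\alpha} \to 0$ with injective connecting maps. Lemma~\ref{lem limit inj} lets $A \otimes -$ pass through this colimit, and continuity of $K_{0}$ together with the exactness of filtered colimits in $\mathrm{Ab}$ reduce the desired middle-exactness to each separable sub-extension. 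For a separable extension I would tensor the mapping cone extension with $A$---still exact, as it is again a mapping cone extension by Remark~\ref{rem mapping cone}---and read off the portion
\begin{equation*}
K_{0}(A \otimes C_{q}) \xrightarrow{\pi_{*}} K_{0}(A \otimes D) \xrightarrow{(\id_{A} \otimes q)_{*}} K_{0}(A \otimes B)
\end{equation*}
of the resulting six-term sequence, where $\pi$ denotes the projection and I invoke the classical identification of the mapping-cone boundary with $q_{*}$. Since $\pi \iota$ is the original inclusion $I \hookrightarrow D$ and $(\id_{A} \otimes \iota)_{*}$ is surjective by hypothesis, the image of $K_{0}(A \otimes I) \to K_{0}(A \otimes D)$ equals that of $\pi_{*}$, which by exactness at $K_{0}(A \otimes D)$ is $\ker((\id_{A} \otimes q)_{*})$, as required. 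The only mildly delicate points---the identification of the boundary with $q_{*}$ and the bookkeeping in the separability reduction---are both routine, and I do not anticipate a real obstacle.
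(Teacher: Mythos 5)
Your proposal is correct and takes essentially the same route as the paper: the forward direction applies the six-term sequence to the extension $0 \to I \to C_{q} \to C_{0}[0,1) \otimes B \to 0$ and uses the vanishing of the $K$-theory of the cone, while the converse combines the reduction to separable subextensions with the Puppe sequence for $\id_{A} \otimes q$ via the identification $C_{\id_{A}\otimes q} \cong A \otimes C_{q}$. The only cosmetic difference is that you re-derive the Puppe sequence from the six-term sequence of the mapping-cone extension rather than citing it.
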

\begin{proof}
Let $A$ be a \cast-algebra and let 
	\begin{equation}\label{eq ext}
	\xymatrix@1{0 \ar[r] & I \ar[r]  & D \ar[r]^{q} & B \ar[r] & 0}
	\end{equation}
be an extension of (not necessarily separable) \cast-algebras.

 ($\Rightarrow$): Suppose that $A$ is $K$-exact. By the homotopy invariance of $K$-theory, we have  $K_{*}(A \otimes C_{0}[0, 1) \otimes B) = 0$. Hence, applying Remark~\ref{rem six-term} to the pullback extension
	\begin{equation*}
	\xymatrix@1{0 \ar[r] & I \ar[r]^{\imath}  & C_{q} \ar[r] & C_{0}[0, 1) \otimes B \ar[r]& 0},
	\end{equation*}
we see that $\id_{A} \otimes \imath$ induces an isomorphism $K_{0}(A \otimes I) \cong K_{0}(A \otimes C_{q})$.

($\Leftarrow$): Conversely, suppose that $A$ satisfies the necessary condition in the lemma. We prove that the sequence
	\begin{equation*}
	\xymatrix@1{K_{0}(A \otimes I) \ar[r]  & K_{0}(A \otimes D) \ar[r] & K_{0}(A \otimes B)}
	\end{equation*}
is exact in the middle.

If $I$, $D$ and $B$ are separable, then the exactness follows from the Puppe exact sequence (cf.\ \cite{MR658514} or \cite[Lemma 6.4.8]{MR1222415}) and the natural isomorphism $C_{\id_{A} \otimes q} \cong A \otimes C_{q}$ of Remark~\ref{rem mapping cone}. 

The general case is reduced to the separable case as follows. Let $\Lambda$ denote the set of separable \cast-subalgebras of $D$, ordered by inclusion. Then $\Lambda$ is a directed set. For each $E \in \Lambda$, we associate a subextension
	\begin{equation*}
	\xymatrix{0 \ar[r] & I \cap E \ar[r] \ar@{^{(}->}[d] & E \ar[r] \ar@{^{(}->}[d] & E/(I \cap E) \ar[r] \ar@{^{(}->}[d]& 0\phantom{.}\\
	0 \ar[r] & I \ar[r]  & D \ar[r] & B \ar[r] & 0.}
	\end{equation*}
It is clear that the inductive limit of the subextensions is the extension (\ref{eq ext}). Since all the connecting maps are injective, the proof is complete by the continuity of $K$-theory (cf.\ \cite[Proposition 6.2.9]{MR1222415}) and the exactness of the inductive limit functor for abelian groups (cf.\ \cite[Theorem 2.6.15]{MR1269324}).
\end{proof}

\begin{cor}\label{cor factor} A \cast-algebra $A$ is $K$-exact if and only if the functor $B \mapsto K_{0}(A \otimes B)$, from the category of {\em separable} \cast-algebras to abelian groups, factors through the category $E$ of Higson (cf.\ \cite{MR1068250, MR1065438}).
\end{cor}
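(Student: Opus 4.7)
The plan is to apply Higson's characterization of the category $E$ from \cite{MR1068250, MR1065438}: a functor $F$ from the category of separable \cast-algebras to abelian groups factors through $E$ if and only if $F$ is homotopy invariant, stable (invariant under $B \mapsto B \otimes \Compact$), and half-exact on extensions of separable \cast-algebras.

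First I would set $F(B) \coloneqq K_{0}(A \otimes B)$ and verify that $F$ is automatically homotopy invariant and stable, regardless of $A$. Homotopy invariance is inherited from $K_{0}$: a homotopy $\phi_{t}\colon B \to B'$ yields a homotopy $\id_{A} \otimes \phi_{t}\colon A \otimes B \to A \otimes B'$, and $K_{0}$ is homotopy invariant. Stability follows from the natural isomorphism $A \otimes (B \otimes \Compact) \cong (A \otimes B) \otimes \Compact$ combined with the stability of $K_{0}$.

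With these two properties automatic, Higson's theorem reduces the question to whether $F$ is half-exact on every extension of separable \cast-algebras. By Lemma~\ref{lem k-exact cone}, this half-exactness condition is precisely equivalent to $K$-exactness of $A$ (the lemma itself already handles the reduction from arbitrary extensions to separable extensions via continuity of $K$-theory and exactness of directed colimits of abelian groups). Combining these facts immediately yields the corollary.

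The main step requiring care is matching our formulation to the exact hypotheses of the characterization of $E$ cited. Depending on the presentation, Higson's theorem may phrase the exactness condition in terms of arbitrary extensions, or in terms of the mapping cone inclusion $\iota\colon I \to C_{q}$ inducing an isomorphism, or as a Puppe-style long exact sequence; but all of these formulations are equivalent for $F(B) = K_{0}(A \otimes B)$ by Lemma~\ref{lem k-exact cone} and Remark~\ref{rem six-term}, so no further work is needed.
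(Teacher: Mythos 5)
Your overall route is the same as the paper's: verify homotopy invariance and stability of $F(B) = K_{0}(A \otimes B)$, use the universal property of $E$ for the forward direction, and use Lemma~\ref{lem k-exact cone} for the backward direction. The forward direction is fine as you wrote it. The one real problem is the backward direction: you base it on the assertion that a functor factors through $E$ \emph{only if} it is half-exact. That converse is not part of the universal property and is false for general functors out of the separable \cast-category: composing $E(\C, -)$ with a non-exact functor of abelian groups (e.g.\ $- \otimes_{\Z} \Z/2$) yields a homotopy invariant, stable functor that factors through $E$ but need not carry extensions to sequences exact in the middle, since an arbitrary functor $E \to \mathbf{Ab}$ has no reason to preserve exactness of the relevant sequences. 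So "factors through $E$ $\Rightarrow$ half-exact" cannot be the bridge.

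What \emph{is} true, and is what the paper uses, is that the mapping cone inclusion $\iota \colon I \to C_{q}$ of a separable extension becomes an isomorphism in $E$ (\cite[Lemme 12]{MR1065438}); hence \emph{any} functor factoring through $E$ inverts $\iota$, simply because functors preserve isomorphisms. Combined with Lemma~\ref{lem k-exact cone}, which characterizes $K$-exactness precisely by $(\id_{A} \otimes \iota)_{*}$ being an isomorphism for all separable extensions, this gives the backward implication without ever invoking half-exactness of $F$. Your closing paragraph gestures at the mapping cone formulation, but frames the issue as a choice among equivalent phrasings of a single biconditional characterization of $E$; the point is rather that the two directions of the corollary rest on two different facts about $E$ (the universal property for one, the invertibility of $\iota$ in $E$ for the other). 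With that substitution your argument matches the paper's proof.
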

\begin{proof} Let $A$ be a \cast and let $F(B) \coloneqq K_{0}(A \otimes B)$.

($\Rightarrow$): Suppose that $A$ is $K$-exact. Then $F$ is half-exact and since $F$ is homotopy invariant and stable (under tensoring with the compacts), it factors through the category $E$ by the universal property (cf.\ \cite[Th\'eor\`eme 7]{MR1065438}).

($\Leftarrow$): Suppose that $F$ factors through $E$. For any extension $\xymatrix@1{0 \ar[r] & I \ar[r]  & D \ar[r]^{q} & B \ar[r] & 0}$ of separable \cast-algebras, the inclusion $\xymatrix@1{\iota\colon I \ar[r] & C_{q}}$ is an equivalence in $E$ (cf.\ \cite[Lemma 12]{MR1065438}). Now Lemma~\ref{lem k-exact cone} completes the proof.
\end{proof}

%\begin{cor}\label{cor factor} Let $A$ be a \cast-algebra and let $F$ denote the functor $F(B) \coloneqq K_{0}(A \otimes B)$ from the category of {\em separable} \cast-algebras to abelian groups. The following statements are equivalent.
%\begin{enumerate}[(i)]
%\item\label{it k-exact} The algebra $A$ is $K$-exact.
%\item\label{it factor E} The functor $F$ factors through the category $E$ of Connes-Higson (cf.\ \cite{MR1068250, MR1065438}).
%\item\label{it factor A} The functor $F$  factors through the asymptotic homotopy category of Connes-Higson  (cf.\ \cite{MR1065438}).
%\end{enumerate}
%\end{cor}
%\begin{proof} %We let $F(B) \coloneqq K_{0}(A \otimes B)$.
%
%(\ref{it k-exact}) $\Rightarrow$ (\ref{it factor E}): If $A$ is exact, then $F$ is half-exact. Then, since $F$ is homotopy invariant and stable (under tensoring with the compacts), it factors through the category $E$ by the universal property \cite[Th\'eor\`eme 7]{MR1065438}.
%
%(\ref{it factor E}) $\Rightarrow$ (\ref{it factor A}): Clear from the definition of $E$.
%
%(\ref{it factor A}) $\Rightarrow$ (\ref{it k-exact}): Since $F$ is stable and satisfies Bott periodicity, if it factors through the asymptotic homotopy category, then it factors through $E$. On the other hand, for any extension $\xymatrix@1{0 \ar[r] & I \ar[r]  & D \ar[r]^{q} & B \ar[r] & 0}$
%of separable \cast-algebras, the inclusion $\xymatrix@1{\iota\colon I \ar[r] & C_{q}}$ is an equivalence in $E$. Now Lemma~\ref{lem k-exact cone} completes the proof.
%\end{proof}

\begin{ex}
\cast-algebras satisfying the K\"unneth formula of Schochet (cf.\ \cite{MR650021}, \cite[Theorem 23.1.3]{MR1656031}) are $K$-exact by Lemma~\ref{lem k-exact cone} (cf.\ \cite[Remark 4.3]{MR2100669}). In particular, the full group \cast-algebra $C^{*}(F_{2})$ of the free group $F_{2}$ on two generators is $K$-exact (but not $\otimes$-exact). 
\end{ex}

Needless to say, not all \cast-algebras are $K$-exact.
\begin{ex}\label{ex K-exactness} 
\begin{enumerate} 
\item\label{item CG} Let $\Gamma$ be an infinite countable discrete group with Khazdan property (T), Kirchberg property (F) and Akemann-Ostrand property (AO), such as a lattice in $\mathrm{Sp}(n, 1)$ (cf.\ \cite{MR2562137}). The {\em full} group \cast-algebra $C^{*}(\Gamma)$ is not $K$-exact (G.~Skandalis \cite{MR1143449}).
\item\label{item M} The product $\prod_{n = 0}^{\infty} M_{n}$ is not {\em $K$-exact} (N.~Ozawa \cite[Theorem A.1]{MR1964549}). 
\end{enumerate}
\end{ex}

\subsection{$K$-continuity}
\begin{defn}\label{defn K-cont} We say that a \cast-algebra $A$ is {\em $K$-continuous} if for every inductive sequence
	\begin{equation*}
	\xymatrix@1{B_{0} \ar[r] & B_{1} \ar[r] & B_{2} \ar[r] & \dots}
	\end{equation*}
of \cast-algebras, the natural map 
	\begin{equation*}
	\varinjlim K_{0}(A \otimes B_{i}) \longrightarrow K_{0}(A \otimes \varinjlim B_{i})
	\end{equation*} 
is an isomorphism.
\end{defn}

\begin{rem} In Definition~\ref{defn K-cont}, we could use $K_{1}$ instead of $K_{0}$. 
\end{rem}
\begin{ex} $\otimes$-continuous \cast-algebras are $K$-continuous, by the continuity of $K$-theory (cf.\ \cite[Proposition 6.2.9]{MR1222415}, \cite[5.2.4, 8.1.5]{MR1656031}).
\end{ex}

\begin{ex} \cast-algebras satisfying the K\"unneth formula of Schochet (cf.\ \cite{MR650021}, \cite[Theorem 23.1.3]{MR1656031}) are $K$-continuous. Indeed, let $A$ be a \cast-algebra satisfying the K\"unneth formula and let 
	\begin{equation*}
	\xymatrix@1{B_{0} \ar[r] & B_{1} \ar[r] & B_{2} \ar[r] & \dots}
	\end{equation*}
be an inductive sequence of \cast-algebras. Then the top row in the diagram
	\begin{equation*}
	\xymatrix{
	0 \ar[r] & \varinjlim K_{*}(A) \otimes K_{*}(B_{i}) \ar[r]  \ar[d] & \varinjlim K_{*}(A \otimes B_{i}) \ar[r] \ar[d] & \varinjlim \Tor(K_{*}(A), K_{*}(B_{i})) \ar[d] \ar[r] & 0\\
	0 \ar[r] & K_{*}(A) \otimes K_{*}(\varinjlim  B_{i}) \ar[r]  & K_{*}(A \otimes \varinjlim B_{i}) \ar[r] & \Tor(K_{*}(A), K_{*}(\varinjlim  B_{i})) \ar[r] & 0
	}
	\end{equation*}
is an extension of abelian groups by \cite[Theorem 2.6.15]{MR1269324}  and the second row is an extension by the K\"unneth formula.	 The left and right vertical maps are isomorphisms by \cite[Corollary 2.6.17]{MR1269324} and thus the middle vertical map is also an isomorphism by five-lemma. Hence $A$ is $K$-continuous.
\end{ex}

\subsection{Proof of Main Theorem~\ref{thm main}}
\begin{proof}[Proof of Theorem~\ref{thm main}] 
($\Rightarrow$):
Let $A$ be a $K$-exact \cast-algebra and let 
	\begin{equation*}
	\xymatrix@1{B_{0} \ar[r]^{\beta_{0}} & B_{1} \ar[r]^{\beta_{1}}  & \dots}
	\end{equation*}
be an inductive sequence. We use the notations of the proof of Theorem~\ref{thm K}($\Rightarrow$). Applying $K$-theory to the diagram (\ref{eq diag of ses}), we get a map of {\em exact} sequences
	\begin{equation*}
	\xymatrix@C=6pt{
	K_{0}(\varinjlim A \otimes I_{n}) \ar[r] \ar[d]^{\cong} & K_{0}(\varinjlim A \otimes D_{n}) \ar[r] \ar[d]^{\cong} & K_{0}(\varinjlim A \otimes B_{n}) \ar[r] \ar[d]^{\beta_{*}} & K_{1}(\varinjlim A \otimes I_{n}) \ar[r] \ar[d]^{\cong} & K_{1}(\varinjlim A \otimes D_{n}) \ar[d]^{\cong}\\
	 K_{0}(A \otimes \varinjlim I_{n}) \ar[r]  & K_{0}(A \otimes \varinjlim D_{n}) \ar[r] & K_{0}(A \otimes \varinjlim B_{n}) \ar[r] & K_{1}(A \otimes \varinjlim I_{n}) \ar[r] & K_{1}(A \otimes \varinjlim D_{n})
	}
\end{equation*}
By five-lemma, the map $\beta_{*}$ is an isomorphism.

($\Leftarrow$): 
Conversely, let $A$ be a $K$-continuous \cast-algebra. Then the functor $F(B) \coloneqq K_{0}(A \otimes B)$, considered on the category of separable \cast-algebras, factors through the asymptotic homotopy category of Connes-Higson by \cite[Theorem 3.11]{MR1262931}. Since $F$ is stable and satisfies Bott periodicity, it in fact factors through the category $E$. Hence by Corollary~\ref{cor factor}, $A$ is $K$-exact.
\end{proof}

%\nocite{*}
%\bibliographystyle{FG-bibstyle}

\bibliographystyle{amsalpha}
\bibliography{../../MathPapers/biblio}
\end{document}